 \newtheorem{thm}{Theorem}[section]
 \newtheorem{lem}[thm]{Lemma}
 \newtheorem{cor}[thm]{Corollary}
\theoremstyle{definition}
 \newtheorem{rem}[thm]{Remark}
 \newtheorem{conj}{Conjecture}
 \numberwithin{equation}{section}
\theoremstyle{definition}
\theoremstyle{remark}
 \numberwithin{equation}{section}
\newcommand{\F}{\mathbb{F}}
\newcommand{\ve}{\varepsilon}
\newcommand{\ol}{\overline}
\newcommand{\bea}{\begin{eqnarray}}
\newcommand{\eea}{\end{eqnarray}}
\newcommand{\bbC}{\mathbb{C}}
\newcommand{\bbQ}{\mathbb{Q}}
\newcommand{\bbR}{\mathbb{R}}
\newcommand{\bbZ}{\mathbb{Z}}   
\renewcommand{\and}{\quad \mbox{and} \quad}  
\renewcommand{\le}{\leqslant}\renewcommand{\leq}{\leqslant}
\renewcommand{\ge}{\geqslant}\renewcommand{\geq}{\geqslant}
\title{Epsilon factors of symplectic type characters in the wild case }
\subjclass[2010]{11S37; 22E50}
\keywords{Local field, Epsilon factor, Symplectic type character, Conductor}
\author[Biswas]{\bfseries Sazzad Ali Biswas}
\address{
Einstein Institute of Mathematics\\ 
Hebrew University of Jerusalem\\ 
Givat Ram, Jerusalem, 91904, Israel}
\email{sazzad.biswas@mail.huji.ac.il, sazzad.jumath@gmail.com}
\thanks{This research was supported by the Israel Science Foundation (grant no: 1676/17)} 
\begin{document}

\vspace{10mm}
\setcounter{page}{1}
\thispagestyle{empty}


\begin{abstract}
By work of John Tate we can associate an epsilon factor with every multiplicative character of a local field.
In this paper we determine the explicit signs of the epsilon factors for symplectic type characters of $K^\times$,
where $K/F$ is a wildly ramified quadratic extension of a non-Archimedean local field $F$ of characteristic
zero.

\end{abstract}

\maketitle

\section{\textbf{Introduction}}

Let $F$ be a non-Archimedean local field of characteristic zero, i.e., 
a finite extension of $p$-adic numbers field $\mathbb{Q}_p$, where $p$ is a prime. 
Let $K/F$ be a finite extension of the field $F$.
Let $e_{K/F}$ be the ramification index of the extension $K/F$ and $f_{K/F}$ be 
the residue degree of $K/F$. The extension $K/F$ is called \textbf{unramified} if $e_{K/F}=1$;
equivalently $f_{K/F}=[K:F]$. 
The extension $K/F$ is called \textbf{totally ramified} if 
$e_{K/F}=[K:F]$; equivalently $f_{K/F}=1$. 
Let $\mathrm{Char}(\mathbb{F}_{q_F})$ be the characteristic of the residue field 
$\mathbb{F}_{q_F}$ of $F$, where 
$q_F$ is the cardinality of the residue field of $F$. 
If $\mathrm{gcd}(\mathrm{Char}(\mathbb{F}_{q_F}),[K:F])=1$, then the extension 
$K/F$ is called \textbf{tamely ramified}, otherwise \textbf{wildly ramified}. 
A multiplicative character $\chi$ of $K^\times$ (where $K/F$ is a quadratic field extension) is said to be 
\textbf{symplectic type} if $\chi|_{F^\times}=\omega_{K/F}$, where $\omega_{K/F}$ is the quadratic 
character of $F^\times$ associated
to $K^\times$ by class field theory.

In \cite{DP}, Dipendra Prasad shows that the epsilon factor of a symplectic type character is $\pm 1$ and in
the unramified and totally tamely ramified cases, he gives explicit sign of symplectic type character
(cf. \cite{DP}, pp. 22-23) by 
using Deligne's twisting formula for characters (cf. Corollary \ref{Corollary 2.2}(3)). This explicit computation of the 
{\it signs} has many applications. For instance, in Theorem 1.2 of \cite{DP}, Prasad generalizes the 
Tunnell Theorem (cf. \cite{Tunnell}) in terms of the explicit signs of symplectic type characters. As to application, 
one also can see  
\cite{MT}.
But in the {\bf wild case} (that is, when $K/F$ is a quadratic {\it wildly} ramified extension),
the determination of the explicit signs of 
epsilon factors of symplectic type characters were not known. 
In this paper, we determine the explicit signs of the epsilon factors
of symplectic
type characters in the wild case.
The computation of this paper 
is based on
the {\bf Lamprecht-Tate formula} (cf. \cite{JT}, Proposition 1). 

To apply the  Lamprecht-Tate formula, we first need to know the conductor of characters.
In the  following theorem first we show that the conductor of a symplectic type 
character is either $2t+1$ or a even $\geq 2(t+1)$, where
$t$ is the ramification break of the extension $K/F$ (or of the Galois group $\mathrm{Gal}(K/F)$).
\begin{thm}\label{Theorem 1.1}
Let $K/F$ be a quadratic wildly ramified extension of $F$ and $t$ be  the ramification break of 
 the extension $K/F$. Denote by $\omega_{K/F}$ as the quadratic character of $F^\times$ associated to $K$
 by class field theory.  Let $m$ be the conductor of symplectic type character $\chi$. Then 
the conductor of $\omega_{K/F}$ is $t+1$ and 
 $m=2t+1$ when $m$ is odd and $m\geq2(t+1)$ when $m$ is even.

\end{thm}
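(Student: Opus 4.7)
The plan is to split the theorem into two parts: computing the conductor of $\omega_{K/F}$, and then bounding the conductor $m$ of $\chi$. The key computational input is a clean comparison of the unit filtrations of $F$ and $K$ at the graded level.

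For the conductor of $\omega_{K/F}$, I invoke the conductor--discriminant formula for $G=\mathrm{Gal}(K/F)\cong\mathbb{Z}/2$. Since the trivial character has conductor zero, this reduces to $v_F(\mathfrak{d}_{K/F})=a(\omega_{K/F})$. By hypothesis, $t$ is the (lower) ramification break, so $G_0=G_1=\cdots=G_t$ has order $2$ while $G_{t+1}=\{1\}$, whence $v_K(\mathfrak{D}_{K/F})=\sum_{i\ge 0}(|G_i|-1)=t+1$. Since $f_{K/F}=1$, this equals $v_F(\mathfrak{d}_{K/F})$, giving $a(\omega_{K/F})=t+1$.

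The bulk of the work is the second part, for which I isolate the following elementary comparison. Fix uniformizers $\pi_F$, $\pi_K$ and write $\pi_F=u\pi_K^{2}$ with $u\in\cO_K^{\times}$. Since $K/F$ is totally ramified, the residue fields coincide, and for each $n\ge 1$ the inclusion $U_F^n\hookrightarrow U_K^{2n}$ induces a map
\[
U_F^n/U_F^{n+1}\;\longrightarrow\;U_K^{2n}/U_K^{2n+1},
\]
which, under the standard identifications of both sides with the common residue field $\bbF_{q_F}$, is multiplication by $\bar u^{\,n}\in\bbF_{q_F}^{\times}$, and hence is an isomorphism.

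Granted this, both bounds follow quickly. Because $\omega_{K/F}=\chi|_{F^{\times}}$ is nontrivial on $U_F^t$, the isomorphism produces some $x\in U_F^t\subset U_K^{2t}$ on which $\chi$ is nontrivial modulo $U_K^{2t+1}$; thus $m\ge 2t+1$. If $m$ is even this already gives $m\ge 2(t+1)$. If $m$ is odd, write $m=2s+1$ with $s\ge t$: then $\chi$ is trivial on $U_K^{2s+1}$ yet nontrivial on $U_K^{2s}$, and reading the same isomorphism in the opposite direction forces $\chi|_{F^{\times}}$ to be nontrivial on $U_F^s$. But $\omega_{K/F}$ is trivial on $U_F^{t+1}$, so $s\le t$ and hence $s=t$, i.e.\ $m=2t+1$.

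The only step requiring a moment of care is the verification that the graded map $U_F^n/U_F^{n+1}\to U_K^{2n}/U_K^{2n+1}$ is an isomorphism rather than the zero map; this uses $f_{K/F}=1$ together with the invertibility of the unit $u$ in the residue field. Beyond this, the argument is a formal manipulation of filtrations, and I foresee no real obstacle.
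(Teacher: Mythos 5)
Your proof is correct, but it diverges from the paper's in a way worth noting. For the first assertion, $a(\omega_{K/F})=t+1$, the paper argues via the norm filtration: by local class field theory $\omega_{K/F}$ is trivial on $N_{K/F}(K^\times)$, and Serre's results on $N_{K/F}(U_K^{\Psi(n)})$ give triviality of $\omega_{K/F}$ on $U_F^{n+1}$ for $n\ge t$, hence $a(\omega_{K/F})\le t+1$, with equality then asserted. Your route through the conductor--discriminant formula together with $v_K(\mathcal{D}_{K/F})=\sum_{i\ge0}(|G_i|-1)=t+1$ is genuinely different and arguably tighter, since it delivers the exact value in one stroke rather than an upper bound that must be separately shown to be attained. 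For the second assertion the two arguments are structurally the same: the paper's key identity $U_K^{2n}=U_F^{n}U_K^{2n+1}$ (used there in the form $U_K^{2t+2b}=U_F^{t+b}U_K^{2t+2b+1}$, together with $U_K^{2n+1}\cap F^\times=U_F^{n+1}$) is exactly the surjectivity of your graded map $U_F^n/U_F^{n+1}\to U_K^{2n}/U_K^{2n+1}$; the paper asserts this identity while you justify it by exhibiting the map as multiplication by $\bar u^{\,n}$ on the common residue field, which is a worthwhile addition. One cosmetic point: in deriving $m\ge 2t+1$ you do not need the isomorphism or the phrase ``nontrivial modulo $U_K^{2t+1}$'' (which is not yet meaningful at that stage, since triviality of $\chi$ on $U_K^{2t+1}$ is not known); the inclusion $U_F^t\subset U_K^{2t}$ and the nontriviality of $\omega_{K/F}$ on $U_F^t$ already force $\chi$ to be nontrivial on $U_K^{2t}$, hence $m\ge 2t+1$.
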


When the conductor of a symplectic type character is {\bf even}, we have the following theorem.
 \begin{thm}\label{Theorem 1.2}
Let $K/F$ be a wildly ramified quadratic extension of $F$. Let $\psi_F$ be a fixed nontrivial additive character of $F$.
Let $\pi_K$ be a uniformizer of $K$ and consider $\pi_F=N_{K/F}(\pi_K)$ as 
a uniformizer of $F$.
Let $\psi$ be a nontrivial additive character of $K$ of the form 
$\psi(x):=c\cdot(\psi_F\circ Tr_{K/F})(x)=\psi_F(Tr_{K/F}(cx))$ for $x\in K$ with conductor
$n(\psi)=0$,
where $c\in K^\times$ such that the trace $Tr_{K/F}(c)=0$.
Let $\chi$ be a symplectic type character of $K^\times$ with even conductor $a(\chi)=2d$ where $d\ge 2$. 
Then we have
\begin{equation}
 \epsilon(\chi,\psi)=\chi(-1)^d.
\end{equation}
\end{thm}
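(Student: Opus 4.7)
The plan is to invoke the Lamprecht-Tate formula (Proposition 1 of \cite{JT}, the tool flagged in the introduction) to reduce $\epsilon(\chi,\psi)$ to a single character value, and then to pin that value down using the defining properties of a symplectic type character. Since $a(\chi)=2d\geq 4$ is even, Lamprecht-Tate produces an element $\alpha\in K^\times$ with $v_K(\alpha)=-2d$ such that $\chi(1+x)=\psi(\alpha x)$ for every $x\in\mathfrak{p}_K^d$, and gives
\[ \epsilon(\chi,\psi)=\chi(\alpha)^{-1}. \]
The problem thus reduces to the evaluation $\chi(\alpha)=\chi(-1)^d$; note that $\chi(-1)=\omega_{K/F}(-1)\in\{\pm 1\}$ makes $\chi(-1)^{-d}=\chi(-1)^d$, so the resulting formula matches the claim.

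Next I would extract two structural constraints on $\alpha$ from $\chi|_{F^\times}=\omega_{K/F}$. First, restricting the Lamprecht-Tate relation to $y\in F^\times\cap(1+\mathfrak{p}_K^d)$ gives $\omega_{K/F}(1+y)=\psi_F(y\cdot Tr_{K/F}(c\alpha))$; comparing with the Lamprecht-Tate description of $\omega_{K/F}$ itself (which has conductor $t+1$ by Theorem \ref{Theorem 1.1} and a representative of $F$-valuation $-(t+1)$) determines $Tr_{K/F}(c\alpha)$ in a specific coset modulo $\mathfrak{p}_F^{-\lceil d/2\rceil}$. Second, the Hasse norm theorem gives $\omega_{K/F}\circ N_{K/F}\equiv 1$, so $\chi$ is trivial on $N_{K/F}(K^\times)$; in particular the hypothesis $\pi_F=N_{K/F}(\pi_K)$ yields $\chi(\pi_F)=1$. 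A useful auxiliary is the Galois relation $\chi\circ\sigma=\chi^{-1}$ (from $\chi(x\sigma(x))=\omega_{K/F}(N_{K/F}(x))=1$), which combined with $\psi\circ\sigma=\psi(-\cdot)$ (a consequence of $Tr_{K/F}(c)=0$) imposes a Galois compatibility on $\alpha$ modulo the ambiguity $\mathfrak{p}_K^{-d}$.

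Using these constraints I would exhibit, within the Lamprecht-Tate ambiguity class of $\alpha$, a representative of the explicit form
\[ \alpha=(-1)^d\,\pi_F^{-d}\cdot u,\qquad u\in\mathfrak{O}_K^\times\text{ with }\chi(u)=1, \]
obtained by adding an appropriate element of $\mathfrak{p}_K^{-d}$. For such a representative one computes
\[ \chi(\alpha)=\chi(-1)^d\,\chi(\pi_F)^{-d}\,\chi(u)=\chi(-1)^d, \]
whence $\epsilon(\chi,\psi)=\chi(\alpha)^{-1}=\chi(-1)^d$, as required.

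The main obstacle is the last step, exhibiting the representative in this explicit form. This requires careful bookkeeping with the different $\mathfrak{d}_{K/F}$ (of $K$-valuation $t+1$), the anti-invariant element $c$, and the structure of $K=F(\sqrt{\delta})$ as a wildly ramified quadratic extension of residue characteristic $2$; one must check that the normalization conditions on $Tr_{K/F}(c\alpha)$ and on $\sigma(\alpha)+\alpha$ are compatible with the proposed shape of $\alpha$. Should a direct construction prove intractable, a natural fallback is induction on $d$: base case $d=t+1$ (the minimal even case permitted by Theorem \ref{Theorem 1.1}), with the inductive step via Deligne's twisting formula (Corollary \ref{Corollary 2.2}(3)) applied to a factorization $\chi=\chi_0\cdot\mu$ in which $\chi_0$ is of symplectic type with conductor $2(d-1)$ and $\mu$ is trivial on $F^\times$.
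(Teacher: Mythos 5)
Your overall strategy coincides with the paper's: apply the even\--conductor case of the Lamprecht--Tate formula, and then evaluate $\chi$ at the normalizing element using $\chi(\pi_F)=\omega_{K/F}(N_{K/F}(\pi_K))=1$ and $\chi(-1)^{-d}=\chi(-1)^d$. But as written the argument has two genuine gaps. First, the even\--conductor formula is $\epsilon(\chi,\psi)=\chi(c)\,\psi(c^{-1})$ (Corollary \ref{Corollary 2.2}(1)), i.e.\ $\chi(\alpha)^{-1}\psi(\alpha)$ with $\alpha=c^{-1}$ in your notation; you drop the factor $\psi(\alpha)$. This is not cosmetic, because your whole plan is to move within the ambiguity class of $\alpha$: the relation $\chi(1+x)=\psi(\alpha x)$ on $P_K^{d}$ determines $\alpha$ only modulo $P_K^{-d}$, and if $\alpha'=\alpha+\gamma$ with $\gamma\in P_K^{-d}$ then $\chi(\alpha')^{-1}=\chi(\alpha)^{-1}\psi(\gamma)^{-1}$ while $\psi(\alpha')=\psi(\alpha)\psi(\gamma)$ --- only the product is constant on the class, not $\chi(\alpha)^{-1}$ alone. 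So even after producing your representative $\alpha=(-1)^d\pi_F^{-d}u$ you would still owe the value $\psi\bigl((-1)^d\pi_F^{-d}u\bigr)$, which equals $1$ (via $\psi|_F=1$) only if $u$ can be taken in $F^\times$, not for an arbitrary unit with $\chi(u)=1$. Second, the construction of that representative is precisely the step you defer as ``the main obstacle,'' so the substantive content of the theorem is not actually proved; and the proposed fallback via Deligne twisting does not apply in general, since Corollary \ref{Corollary 2.2}(3) needs $a(\alpha)\ge 2a(\beta)$, whereas in your factorization $\chi=\chi_0\mu$ one has $a(\mu)=2d$ against $2a(\chi_0)=4(d-1)$, which fails for $d\ge 3$.

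The paper resolves both points simultaneously by a single explicit choice: since $\pi_F=N_{K/F}(\pi_K)=-\pi_K^2$, it takes $c'=\pi_K^{2d}$, so that $c'^{-1}=(-\pi_F)^{-d}\in F^\times$ (your $u$ is just $1$). The congruence $\chi(1+x)=\psi(c'^{-1}x)$ for $\nu_K(x)\ge d$ is checked using $\psi|_F=1$, the factor $\psi(c'^{-1})=\psi((-\pi_F)^{-d})=1$ for the same reason, and then $\chi(c')=\chi(-\pi_F)^{d}=\chi(-1)^{d}\,\omega_{K/F}(\pi_F)^{d}=\chi(-1)^{d}$ because $\pi_F$ is a norm. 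If you want to salvage your version, you must (i) carry the $\psi(\alpha)$ term throughout and (ii) show the representative can be chosen with $u\in F^\times$ (or directly $u=1$), at which point you have reproduced the paper's computation.
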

And when the conductor of the symplectic type character is $2t+1$, we obtain the following theorem.
\begin{thm}\label{Theorem 1.3}
Let $K/F$ be a wildly ramified quadratic extension of $F$. Let $\psi_F$ be a fixed nontrivial additive character of $F$.
Let $\pi_K$ be a uniformizer of $K$ and consider $\pi_F=N_{K/F}(\pi_K)$ as 
a uniformizer of $F$.
Let $\psi$ be a nontrivial additive character of $K$ of the form $\psi=c\cdot(\psi_F\circ Tr_{K/F})$ with conductor 
$n(\psi)=2l+1$,
where $c\in K^\times$ such that the trace $Tr_{K/F}(c)=0$.
Let $t$ be the ramification break of the extension $K/F$.
Let $\chi$ be a symplectic type character with conductor $a(\chi)=2t+1$.
 Then we have
\begin{equation}
              \epsilon(\chi,\psi)=\chi(-1)^l G(Q),
             \end{equation}
where 
$$G(Q):= q_K^{-1/2}\sum_{x\in P_K^t/P_K^{t+1}} Q(x), \quad Q(x):=\chi^{-1}(1+x)(c'^{-1}\psi)(x),$$
and $\nu_K(c')=2t+2l+2$ such that
$$\chi(1+y)=\psi(\frac{y}{c'}),\quad\text{for all $y\in P_{K}^{t+1}$}.$$

\end{thm}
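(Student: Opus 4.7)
The strategy is to apply the Lamprecht--Tate formula (\cite{JT}, Proposition~1), which is designed for exactly this odd-conductor situation. The element $c'$ in the statement plays the role of the Lamprecht--Tate parameter: it encodes the restriction $\chi|_{1+P_K^{t+1}}$ via the additive character $1+y\mapsto \psi(y/c')$, and its valuation $\nu_K(c')=a(\chi)+n(\psi)=2t+2l+2$ is forced by matching levels of the filtrations. Because $a(\chi)=2t+1$ is odd, the Lamprecht--Tate reduction does not collapse to a single value but leaves a genuine ``quadratic Gauss sum'' over the middle layer $P_K^t/P_K^{t+1}$, which is precisely $G(Q)$.

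The first step is to establish
$$\epsilon(\chi,\psi) \;=\; \chi(c')\cdot G(Q).$$
This is essentially the content of the Lamprecht--Tate formula: starting from the finite integral expression for $\epsilon(\chi,\psi)$, one substitutes $x=c'(1+u)$, splits the range by cosets of $1+P_K^{t+1}$ inside $1+P_K^{t}$, applies orthogonality on the upper half $P_K^{t+1}/P_K^{2t+1}$ using the defining relation of $c'$, and normalizes by $q_K^{-1/2}$ to land on $G(Q)$.

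The second and harder step is to evaluate $\chi(c')=\chi(-1)^l$. Since $\nu_K(c')=2(t+l+1)$ is even, I would write $c'=\pi_F^{t+l+1}u$ with $u\in\mathcal{O}_K^{\times}$. Because $\pi_F=N_{K/F}(\pi_K)$ is a norm from $K$, we have $\omega_{K/F}(\pi_F)=1$; the symplectic-type hypothesis $\chi|_{F^\times}=\omega_{K/F}$ then gives $\chi(\pi_F)=1$, so $\chi(c')=\chi(u)$. To pin down $u$ modulo $1+P_K^{t+1}$, I would rewrite the Lamprecht--Tate compatibility as $\chi(1+y)=\psi_F(Tr_{K/F}(cy/c'))$ and compare it against the symplectic restriction $\chi(1+y)=\omega_{K/F}(1+y)$ for $y\in F\cap P_K^{t+1}$. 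Using Theorem~\ref{Theorem 1.1} (conductor of $\omega_{K/F}$ equal to $t+1$) together with the trace-zero condition $Tr_{K/F}(c)=0$ (which forces $c$ to have odd $K$-valuation $2l+1$), this comparison pins $u$ down and yields $\chi(u)=\chi(-1)^l$.

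The main obstacle will be this final unit calculation. The Lamprecht--Tate parameter $c'$ is only defined modulo $P_K^{t}$, so one must first verify that $\chi(c')$ is independent of the representative chosen, which follows from $\chi$ being trivial on $1+P_K^{2t+1}$ and the ambiguity in $c'$ affecting only lower-order terms. Next, the parity of $t$ and of $l$ must be tracked carefully through the position of $P_K^{t+1}\cap F$ inside the filtration of $F^\times$ and through the explicit form of $\psi$, since it is the interaction between the symplectic constraint, the conductor identity of Theorem~\ref{Theorem 1.1}, and the trace-zero normalization of $c$ that ultimately produces the sign $(-1)^l$ appearing in $\chi(c')$.
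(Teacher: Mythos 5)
Your overall strategy coincides with the paper's: both arguments are an application of the Lamprecht--Tate formula in its odd-conductor form (Corollary \ref{Corollary 2.2}(2)), reducing the theorem to the evaluation of the outer factor multiplying $G(Q)$. The gap is that this evaluation --- which is the actual content of the theorem --- is only gestured at in your second step, and the route you sketch for it has concrete problems. First, Corollary \ref{Corollary 2.2}(2) gives $\epsilon(\chi,\psi)=\chi(c')\,\psi(c'^{-1})\,G(Q)$, not $\chi(c')\,G(Q)$; the factor $\psi(c'^{-1})$ is not equal to $1$ for an arbitrary $c'$ of valuation $2t+2l+2$, and you never account for it. Second, your claim that $\chi(c')$ is independent of the admissible representative is false: the compatibility $\chi(1+y)=\psi(y/c')$ on $P_K^{t+1}$ only determines $c'$ modulo $U_K^{t}$, and $\chi$ is nontrivial on $U_K^{t}$ precisely because $a(\chi)=2t+1$; it is only the full product $\chi(c')\psi(c'^{-1})G(Q)$ that is independent of the choice. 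Hence one cannot compute $\chi(c')$ abstractly from the symplectic condition --- one must fix a concrete $c'$ and carry the corresponding $G(Q)$ along. (Also, $Tr_{K/F}(c)=0$ does not force $\nu_K(c)=2l+1$; by Lemma \ref{Lemma 3.1} one has $\nu_K(c)=n(\psi)-2n(\psi_F)-d_{K/F}$.)

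The paper closes exactly this gap with a short explicit device that your sketch misses: take $c'=\pi_K^{a(\chi)+n(\psi)}=\pi_K^{2(t+l+1)}=(\pi_K^{2})^{t+l+1}$ and use $\pi_F=N_{K/F}(\pi_K)=-\pi_K^{2}$, so that $c'=(-\pi_F)^{t+l+1}$ lies in $F^\times$. Then $\psi(c'^{-1})=1$ because $\psi$ is trivial on $F$ (Lemma \ref{Lemma 3.1} --- this is precisely what the trace-zero normalization of $c$ buys), and
$\chi(c')=\chi(-1)^{t+l+1}\,\omega_{K/F}(\pi_F)^{t+l+1}=\chi(-1)^{t+l+1}=\chi(-1)^{l}$,
the last two equalities because $\pi_F$ is a norm and because (as used in the proof of Theorem \ref{Theorem 1.2}) $t$ is odd, so $\chi(-1)^{t+1}=1$. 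Your decomposition $c'=\pi_F^{t+l+1}u$ is compatible with this, but without identifying $u=(-1)^{t+l+1}$ --- and without verifying that this particular $c'$ satisfies the Lamprecht--Tate compatibility condition, a check the paper also owes --- the sign $\chi(-1)^{l}$ is not actually derived.
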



\begin{rem}[{\bf Convention for epsilon factors}]
Mainly there are two conventions for local epsilon factors. They are due to Langlands (cf. \cite{RL}) and Deligne (cf. \cite{D1}). 
In this paper, we use Langlands' convention (cf. \cite{JT2}, p. 17, Equation (3.6.4)) 
$\epsilon_L(\chi,\psi,s)$, $s\in\bbC$ for epsilon factor,
and our $\epsilon(\chi,\psi)$ is as follows:
\begin{center}
 $\epsilon(\chi,\psi):=\epsilon_{L}(\chi,\psi,\frac{1}{2})$.
\end{center}

 
\end{rem}

\section{\textbf{Notations and Preliminaries}}

Let $F$ be a non-Archimedean local field of characteristic zero. 
Let $O_F$ be the ring of integers of $F$, $P_F$ the unique prime ideal of $O_F$ and $\pi_F$ a choice of 
uniformizer. Let $q_F$ be the cardinality of the residue field.
Let $U_F=O_F-P_F$ be the group of units in $O_F$.
Let $P_{F}^{i}=\{x\in F:v_F(x)\geq i\}$ and for $i\geq 0$ define $U_F^i=1+P_{F}^{i}$
(with $U_{F}^{0}=U_F=O_{F}^{\times}$).

We let $a(\chi)$ be the conductor of 
 nontrivial character $\chi: F^\times\to \mathbb{C}^\times$, i.e., $a(\chi)$ is the smallest
 integer $\geq 0$ such that $\chi$ is trivial
 on $U_{F}^{a(\chi)}$. We call $\chi$ is {\it unramified} if the conductor of $\chi$ is zero and otherwise {\it ramified}.
 If $K/F$ is a quadratic field extension of $F$, then we denote 
 $\omega_{K/F}$ as the quadratic character of $F^\times$ associated to $K$ by class field theory; i.e., it is 
 a unique nontrivial character of $F^\times/N_{K/F}(K^\times)$, where $N_{K/F}$ denotes the norm map 
 from $K^\times$ to $F^\times$.


The {\it conductor} of any nontrivial additive character $\psi$ of the field $F$ is an integer
$n(\psi)$ if $\psi$ is trivial
on $P_{F}^{-n(\psi)}$, but nontrivial on $P_{F}^{-n(\psi)-1}$. 

\subsection{Epsilon factors}

For a nontrivial multiplicative character $\chi$ of $F^\times$ and nontrivial additive character
$\psi$ of $F$, we have 
\begin{equation}\label{eqn 2.1}
 \epsilon(\chi,\psi)=\chi(c)\frac{\int_{U_F}\chi^{-1}(x)\psi(x/c) dx}{|\int_{U_F}\chi^{-1}(x)\psi(x/c) dx|},
\end{equation}
where the Haar measure $dx$ is normalized such that the measure of $O_F$ is $1$ and 
 $c\in F^\times$ with F-valuation $n(\psi)+a(\chi)$. The above formula (\ref{eqn 2.1}) can be modified 
(cf. \cite{JT}, pp. 93-94) as follows:
\begin{equation}\label{eqn 2.2}
 \epsilon(\chi,\psi)=\chi(c)q_F^{-a(\chi)/2}\sum_{x\in\frac{U_F}{U_{F}^{a(\chi)}}}\chi^{-1}(x)\psi(x/c).
\end{equation}
where $c=\pi_{F}^{a(\chi)+n(\psi)}$.
\begin{rem}
If $u\in U_F$ is a unit and replace $c=cu$ in equation (\ref{eqn 2.2}), then we observe that 
$\epsilon(\chi,\psi)$ \textbf{depends only} on the exponent $v_{F}(c)=a(\chi)+n(\psi)$.

\end{rem}

For this paper we need the following Lamprecht-Tate formula.

\begin{thm}[\cite{JT}, Proposition 1, Lamprecht-Tate formula]\label{Theorem 2.1}
Let $F$ be a non-Archimedean local field.
Let $\chi$ be a character of $F^\times$ of conductor $a(\chi)$ and let $m$ be a natural number 
such that $2m\le a(\chi)$. Let $\psi$ be a nontrivial additive character of $F$. 
Then there exists $c\in F^\times$, $\nu_F(c)=a(\chi)+n(\psi)$ such that 
\begin{equation}\label{eqn 5.4.5}
 \chi(1+y)=\psi(c^{-1}y)\qquad\text{for all $y\in P_{F}^{a(\chi)-m}$},
\end{equation}
and for such a $c$ we have:
\begin{equation}\label{eqn 6.0.9}
 \epsilon(\chi,\psi)=\chi(c)\cdot q_{F}^{-\frac{(a(\chi)-2m)}{2}}
 \sum_{x\in U_F^m/U_F^{a(\chi)-m}}\chi^{-1}(x)\psi(c^{-1}x).
\end{equation}
{\bf Remark:} The assumption (\ref{eqn 5.4.5}) is obviously fulfilled for $m=0$ because then both sides are $=1$,
and the resulting formula for $m=0$ is the original formula (\ref{eqn 2.2}) for abelian local constant.
\end{thm}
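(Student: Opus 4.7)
The plan is to prove the theorem in two stages: first establish the existence of $c$ with the asserted properties, then derive the formula for $\epsilon(\chi,\psi)$ by reducing the summation range in the preceding formula (\ref{eqn 2.2}).

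For the \textbf{existence of $c$}, I would begin from the observation that $2m \le a(\chi)$ implies $2(a(\chi)-m) \ge a(\chi)$, so the map $u \mapsto u-1$ gives a group isomorphism
\[
U_F^{a(\chi)-m}/U_F^{a(\chi)} \;\xrightarrow{\sim}\; P_F^{a(\chi)-m}/P_F^{a(\chi)}
\]
(multiplicative to additive), because the cross term in $(1+y)(1+y')$ lies in $P_F^{2(a(\chi)-m)} \subseteq P_F^{a(\chi)}$. Through this isomorphism the restriction of $\chi$ (which kills $U_F^{a(\chi)}$) becomes an additive character of $P_F^{a(\chi)-m}/P_F^{a(\chi)}$. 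By the self-duality of $F$ with respect to $\psi$, every such character is of the form $y \mapsto \psi(c^{-1}y)$ for some $c \in F^\times$. The exact valuation $\nu_F(c) = a(\chi)+n(\psi)$ is then forced by conductor estimates on both sides: the lower bound comes from $\chi|_{U_F^{a(\chi)}} = 1$ (forcing $\psi(c^{-1}P_F^{a(\chi)}) = 1$), and the upper bound from $\chi|_{U_F^{a(\chi)-1}} \neq 1$.

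For the \textbf{derivation of the formula}, I would start from (\ref{eqn 2.2}), which by the preceding remark remains valid with any element of $F^\times$ of valuation $a(\chi)+n(\psi)$ in place of $\pi_F^{a(\chi)+n(\psi)}$, in particular with the $c$ just constructed:
\[
\epsilon(\chi,\psi) = \chi(c)\,q_F^{-a(\chi)/2} \sum_{x \in U_F/U_F^{a(\chi)}} \chi^{-1}(x)\psi(c^{-1}x).
\]
Next I would parametrize $U_F/U_F^{a(\chi)}$ through the intermediate filtration by writing $x = u(1+z)$ with $u$ ranging over $U_F/U_F^{a(\chi)-m}$ and $z$ over $P_F^{a(\chi)-m}/P_F^{a(\chi)}$; the bijectivity is the short exact sequence $1 \to U_F^{a(\chi)-m}/U_F^{a(\chi)} \to U_F/U_F^{a(\chi)} \to U_F/U_F^{a(\chi)-m} \to 1$ together with the isomorphism from Step 1. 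Applying the hypothesis $\chi(1+z)=\psi(c^{-1}z)$ and expanding $\psi(c^{-1}u(1+z))$, the summand factors as
\[
\chi^{-1}(u)\,\psi(c^{-1}u)\cdot\psi\bigl(c^{-1}(u-1)z\bigr).
\]
The inner sum over $z$ is an orthogonality sum for an additive character on $P_F^{a(\chi)-m}/P_F^{a(\chi)}$; using $\nu_F(c) = a(\chi)+n(\psi)$, the character is trivial precisely when $\nu_F(u-1) \ge m$, i.e.\ $u \in U_F^m$. So the inner sum equals $q_F^m$ on $U_F^m$ and $0$ elsewhere, and combining yields the claimed formula (\ref{eqn 6.0.9}).

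The \textbf{main obstacle} is bookkeeping: the bound $2m \le a(\chi)$ has to do triple duty, and one must verify it is used correctly in each place. Namely, $2m \le a(\chi)$ is what (i) makes the multiplicative-to-additive identification $U_F^{a(\chi)-m}/U_F^{a(\chi)} \cong P_F^{a(\chi)-m}/P_F^{a(\chi)}$ well defined, so that (\ref{eqn 5.4.5}) even parses as an equality of characters; (ii) ensures the summand $\chi^{-1}(u)\psi(c^{-1}u)$ in the reduced sum is well defined on cosets of $U_F^{a(\chi)-m}$ inside $U_F^m$ (one checks $\psi(c^{-1}(u-1)z) = 1$ for $u-1 \in P_F^m$, $z \in P_F^{a(\chi)-m}$); and (iii) sets the orthogonality threshold $\nu_F(u-1) \ge m$ in the inner sum. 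A secondary care point is that changing $c$ by a unit multiple would destroy hypothesis (\ref{eqn 5.4.5}), so the formula must be proved for the specific $c$ at hand, even though (\ref{eqn 2.2}) is insensitive to unit multiples of $c$.
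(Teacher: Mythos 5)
Your proposal is correct, and it follows essentially the same route the paper itself sketches (in the ``proper choice of $c'$'' discussion inside the proof of Theorem \ref{Theorem 1.2}, culminating in equation (\ref{eqn 5.4.7})): the multiplicative-to-additive identification of $U_F^{a(\chi)-m}/U_F^{a(\chi)}$ with $P_F^{a(\chi)-m}/P_F^{a(\chi)}$, local additive duality to produce $c$ with $\nu_F(c)=a(\chi)+n(\psi)$, and the orthogonality collapse of the inner sum reducing (\ref{eqn 2.2}) to (\ref{eqn 6.0.9}). The paper cites the result from Tate rather than proving it in full, but your argument fills in exactly the steps that proof requires, with the valuation and well-definedness checks done correctly.
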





\begin{cor}[cf. \cite{RL}, Lamprecht Formulas, Lemma 8.1]\label{Corollary 2.2}
 Let $\chi$ be a character of $F^\times$. Let $\psi$ be a nontrivial additive character of $F$.
\begin{enumerate}
 \item When $a(\chi)=2d\, (d\ge 1)$, we have 
$$\epsilon(\chi,\psi)=\chi(c)\psi(c^{-1}).$$
\item When $a(\chi)=2d+1\, (d\ge1)$, we have 
$$\epsilon(\chi,\psi)=\chi(c)\psi(c^{-1})\cdot q_F^{-\frac{1}{2}}\sum_{x\in P_F^d/P_F^{d+1}}\chi^{-1}(1+x)\psi(c^{-1}x).$$
\item {\bf Deligne's twisting formula (cf. \cite{D1}, Lemma 4.16):}
If $\alpha, \beta\in \widehat{F^\times}$ with $a(\alpha)\ge 2\cdot a(\beta)$, then 
$$\epsilon(\alpha\beta,\psi)=\beta(c)\cdot \epsilon(\alpha,\psi).$$
\end{enumerate}
Here $c\in F^\times$ with $F$-valuation $\nu_F(c)=a(\chi)+n(\psi)$, and in Case (1) and Case (2), $c$ also satisfies 
\begin{center}
 $\chi(1+x)=\psi(\frac{x}{c})$ for all $x\in F^\times$ with $2\cdot\nu_F(x)\ge a(\chi)$.
\end{center}
And in case (3), we have $\alpha(1+x)=\psi(x/c)$ for all $\nu_F(x)\ge \frac{a(\alpha)}{2}$.
\end{cor}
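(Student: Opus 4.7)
The plan is to derive all three parts directly from the Lamprecht--Tate formula (Theorem~\ref{Theorem 2.1}) by choosing the auxiliary integer $m$ as large as the hypotheses allow. In each case this shrinks the summation to a point where the remaining sum either collapses to a single term or admits a clean reparametrization, and the ``twisting'' in part~(3) becomes automatic once one notices that $\beta$ is trivial on the residual range of integration.

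For part~(1), I would take $m = d = a(\chi)/2$, so that $2m = a(\chi)$. The hypothesis on $c$ stated in the corollary, namely $\chi(1+x) = \psi(x/c)$ for $2\nu_F(x) \ge a(\chi)$, is exactly condition~(\ref{eqn 5.4.5}) of Theorem~\ref{Theorem 2.1} for this $m$. The sum in~(\ref{eqn 6.0.9}) now runs over the trivial quotient $U_F^d/U_F^d$, contributing only the term at $x=1$, which equals $\psi(c^{-1})$, while the prefactor $q_F^{-(a(\chi)-2m)/2}$ is $1$. This gives $\epsilon(\chi,\psi) = \chi(c)\psi(c^{-1})$.

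For part~(2), I would take $m = d = (a(\chi)-1)/2$, so $2m = a(\chi)-1$. Again the corollary's hypothesis on $c$ matches~(\ref{eqn 5.4.5}). The prefactor becomes $q_F^{-1/2}$ and the sum runs over $U_F^d/U_F^{d+1}$. Writing $x = 1+y$ with $y$ ranging over representatives of $P_F^d/P_F^{d+1}$, the factorization $\psi(c^{-1}(1+y)) = \psi(c^{-1})\,\psi(c^{-1}y)$ allows $\psi(c^{-1})$ to be pulled out of the sum, and the asserted formula in (2) drops out.

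For part~(3), I would set $m = a(\beta)$. Since $a(\alpha) \ge 2 a(\beta)$, the character $\beta$ is trivial on $U_F^{a(\alpha)-1}$ whereas $\alpha$ is not, so $a(\alpha\beta) = a(\alpha)$, and the inequality $2m \le a(\alpha\beta)$ holds. For $x \in P_F^{a(\alpha)-m}$ one has $a(\alpha)-m \ge a(\beta)$, hence $\beta(1+x)=1$ and therefore $(\alpha\beta)(1+x) = \alpha(1+x) = \psi(x/c)$ by the stated hypothesis on $c$; so condition~(\ref{eqn 5.4.5}) holds for $\alpha\beta$ with the \emph{same} $c$. Applying Theorem~\ref{Theorem 2.1} to both $\alpha$ and $\alpha\beta$ with this common $m$ and $c$ produces expressions with identical prefactors and identical sums, because on $U_F^m = U_F^{a(\beta)}$ we have $\beta\equiv 1$, so $(\alpha\beta)^{-1}(x) = \alpha^{-1}(x)$. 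Taking the ratio of the two resulting formulas yields $\epsilon(\alpha\beta,\psi) = \beta(c)\,\epsilon(\alpha,\psi)$. The only real obstacle is the bookkeeping of conductor conventions: one must keep straight that $\nu_F(c) = a(\alpha)+n(\psi) = a(\alpha\beta)+n(\psi)$ in part~(3), so that the single $c$ is legitimate in both applications of Theorem~\ref{Theorem 2.1}.
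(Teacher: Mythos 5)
Your derivation is correct and is essentially the intended one: the paper states this corollary without proof (citing Langlands and Deligne), and parts (1) and (2) are precisely Theorem~\ref{Theorem 2.1} specialized to $m=d$, with the quotient $U_F^d/U_F^{d}$ collapsing in the even case and the substitution $x=1+y$ handling the odd case exactly as you describe. Your argument for (3) --- applying Theorem~\ref{Theorem 2.1} to both $\alpha$ and $\alpha\beta$ with the common choice $m=a(\beta)$ and the same $c$, and noting $\beta\equiv 1$ on $U_F^{a(\beta)}$ so the two sums coincide --- is the standard deduction of Deligne's twisting formula from the Lamprecht--Tate formula and is sound.
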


\subsection{Ramification break}

Let $K/F$ be a Galois extension of $F$ and $G$ be the Galois group of the extension $K/F$. For each $i\ge -1$ we define the 
$i$-th ramification subgroup of $G$ (in the lower numbering) as follows:
\begin{center}
 $G_i=\{\sigma\in G|\quad v_K(\sigma(\alpha)-\alpha)\geq i+1\quad \text{for all $\alpha\in O_K$}\}$.
 \end{center}
 An integer $t$ is called a {\bf ramification break or jump} for the extension $K/F$ or the ramification groups 
 $\{ G_i\}_{i\ge -1}$ if 
 $$G_t\ne G_{t+1}.$$
We also know that there is a decreasing filtration (with upper numbering) of $G$ and which is defined by the 
{\bf Hasse-Herbrand} function $\Psi=\Psi_{K/F}$ as follows:
$$G^u=G_{\Psi(u)}, \quad\text{where $u\in\bbR,\, u\ge -1$}.$$
Since by the definition of Hasse-Herbrand function, $\Psi(-1)=-1, \Psi(0)=0$, we have
$G^{-1}=G_{-1}=G$, and $G^0=G_0$. Thus a real number $t\ge -1$ is called a ramification break for $K/F$ or the filtration
$\{G^i\}_{i\ge -1}$ if 
$$G^t\ne G^{t+\varepsilon},\quad\text{for all $\varepsilon> 0$}.$$
When $G$ is {\bf abelian}, it can be proved (cf. {\bf Hasse-Arf theorem}, \cite{FV}, p. 91) that the ramification
breaks for $G$ are {\bf integers.} But in general, the set of ramification breaks of a Galois group of a local field is 
{\it countably infinite and need not consist of integers.}

 If $K/F$ is quadratic extension, it can be proved that there exists a unique
 ramification break $t$ for which
 we have 
 $$G_t=G\quad and \quad G_{t+1}=\{1\}.$$
 When $K/F$ is unramified (resp. totally tamely ramifield), the ramification jump is $t=-1$ (resp. jump $t=0$). And 
 when $K/F$ is quadratic wildly ramifield extension, the ramification jump is $t\ge 1$.

\section{\textbf{Epsilon factors of Symplectic characters }}

\begin{lem}\label{Lemma 3.1}
 Let $K/F$ be a quadratic wildly ramified extension of $F/\bbQ_2$ and let $\psi_F$ be a fixed nontrivial additive 
 character of $F$.
 Let $\psi$ be a nontrivial character of $K$ of the form 
 $$\psi:=c\cdot(\psi_F\circ Tr_{K/F}),\quad \text{where $c\in K$ with $Tr_{K/F}(c)=0$}.$$
 Then $\psi$ is trivial on $F$ and the conductor of $\psi$ is:
\begin{equation}\label{eqn 3.1}
 n(\psi)=2\cdot n(\psi_F)+\nu_K(c)+d_{K/F},
\end{equation}
 where $d_{K/F}$ is the exponent of the different $\mathcal{D}_{K/F}$ of the extension $K/F$.
\end{lem}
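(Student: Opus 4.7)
The plan is to split the statement into its two assertions and handle them separately, both by direct computation with the trace.

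First, to show that $\psi$ is trivial on $F$, I would simply evaluate: for any $x \in F$, the $F$-linearity of the trace gives
\[
\psi(x) = \psi_F(Tr_{K/F}(cx)) = \psi_F(x \cdot Tr_{K/F}(c)) = \psi_F(0) = 1,
\]
since $Tr_{K/F}(c) = 0$ by hypothesis. This disposes of the first claim essentially for free.

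The conductor computation is the substantive part. By definition $n(\psi)$ is the largest integer $n$ such that $\psi$ is trivial on $P_K^{-n}$, so the task is to determine for which $j \in \Z$ we have $\psi(P_K^j) = 1$. The condition $\psi_F(Tr_{K/F}(c P_K^j)) = 1$ is equivalent to $Tr_{K/F}(c P_K^j) \subseteq P_F^{-n(\psi_F)}$, using the definition of $n(\psi_F)$. I would then invoke the standard duality characterization of the different: for any integers $i,j$,
\[
Tr_{K/F}(P_K^j) \subseteq P_F^i \iff P_K^j \subseteq \mathcal{D}_{K/F}^{-1} P_F^i = P_K^{i\, e_{K/F} - d_{K/F}}.
\]
Since $cP_K^j = P_K^{j + \nu_K(c)}$, this translates into
\[
j + \nu_K(c) \geq -n(\psi_F)\, e_{K/F} - d_{K/F},
\]
and the minimal such $j$ gives
\[
n(\psi) = -j_{\min} = n(\psi_F)\, e_{K/F} + d_{K/F} + \nu_K(c).
\]

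Finally I would specialize to the quadratic wildly ramified case, where $e_{K/F} = 2$, yielding the desired formula $n(\psi) = 2\, n(\psi_F) + \nu_K(c) + d_{K/F}$. There is no real obstacle; the only point requiring care is the bookkeeping for the duality between trace and different, and making sure that ``minimal $j$'' is translated correctly into the sign convention for conductors of additive characters (i.e., $\psi$ trivial on $P_K^{-n}$ means $n(\psi) = n$, not $-n$).
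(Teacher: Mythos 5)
Your proof is correct. The triviality of $\psi$ on $F$ is handled exactly as in the paper, via $F$-linearity of the trace and $Tr_{K/F}(c)=0$. For the conductor formula the paper simply cites Weil (\emph{Basic Number Theory}, p.~142, Corollary 3) for the identity $n(a\cdot\psi)=\nu_K(a)+n(\psi)$ together with $n(\psi_F\circ Tr_{K/F})=e_{K/F}\,n(\psi_F)+d_{K/F}$, and then sets $e_{K/F}=2$; you instead derive that identity from scratch using the duality characterization of the inverse different, $Tr_{K/F}(P_K^j)\subseteq P_F^i \iff P_K^j\subseteq \mathcal{D}_{K/F}^{-1}P_F^{i}=P_K^{ie_{K/F}-d_{K/F}}$. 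Your bookkeeping is right: the minimal $j$ with $\psi(P_K^j)=1$ is $-n(\psi_F)e_{K/F}-d_{K/F}-\nu_K(c)$, whose negative is the claimed conductor. So the two arguments prove the same thing; yours is self-contained where the paper's is a citation, at the cost of a little extra care with the sign conventions, which you handle correctly.
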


\begin{proof}
 We know that (cf. \cite{AW}, p. 142, Corollary 3) the conductor of $\psi$ is 
\begin{align*}
 n(\psi)=n(c\cdot(\psi_F\circ Tr_{K/F}))=\nu_K(c)+n(\psi_F\circ Tr_{K/F})\\
 =\nu_K(c)+e_{K/F}\cdot n(\psi_F)+d_{K/F}=\nu_K(c)+2\cdot n(\psi_F)+d_{K/F}.
\end{align*}
 Again by the given condition, we have $Tr_{K/F}(c)=0$. Therefore, for any $x\in F$, we can show that:
 $$\psi(x)=c\cdot (\psi_F\circ Tr_{K/F})(x)=\psi_F(Tr_{K/F}(xc))=\psi_F(0)=1.$$
This proves that $\psi$ is trivial on $F$.
 \end{proof}

\begin{rem}
{\bf i).} Conversely, if we choose a nontrivial character $\psi$ which is trivial on $F$, then it can be proved that $\psi$ is of the 
form 
$$\psi=c\cdot (\psi_F\circ Tr_{K/F}),$$
where $c\in K$ with $Tr_{K/F}(c)=0$ and $\psi_F$ is some suitable nontrivial additive character of $F$.


Furthermore from Lemma \ref{Lemma 3.1}, we can write:
$$n(\psi)=n(\psi_F\circ Tr_{K/F})+\nu_K(c)=2\cdot n(\psi_F)+d_{K/F}+\nu_K(c)\equiv d_{K/F}+\nu_K(c)\pmod{2}.$$
But  $[K:F]=2$ implies that  $Ker(Tr_{K/F})$ is a  1-dimensional  $F$-space. Thus any other  $c'$
has the form   $c'=ac$ where $a\in F$, hence  
$$\nu_K(c')=\nu_K(a)+\nu_K(c)=2\nu_F(a)+\nu_K(c)\equiv \nu_K(c)\pmod{2}.$$
Therefore   $\nu_K(c')\pmod{2}$   does not depend on the choice of  c.

{\bf ii).} Again by suitable choice of $\psi_F$ and $c$, we can 
always construct (by using Theorem 2.6 on p. 199 of \cite{JN} and equation (\ref{eqn 3.1})) 
such nontrivial additive characters $\psi$ which are taken 
in Theorem \ref{Theorem 1.2} and Theorem \ref{Theorem 1.3}.
\end{rem}

 \begin{proof}[{\bf Proof of Theorem \ref{Theorem 1.1}}]
  By local class field theory, we have $\mathrm{Gal}(K/F)\cong F^{\times}/N_{K/F}(K^\times)$ and $\omega_{K/F}$ is the quadratic character
  of $F^{\times}/N_{K/F}(K^\times)$, i.e., $\omega_{K/F}$ is trivial on $N_{K/F}(K^\times)$. We also know that 
  \begin{equation}\label{eqn 6.11}
   U_{K}^{n}\cap F^\times=\begin{cases}
                           U_{F}^{\frac{n}{2}} & \text{when $n$ is even},\\
                           U_{F}^{\frac{n+1}{2}} & \text{when $n$ is odd}.
                          \end{cases}
\end{equation}
Let $s$ be the conductor of the quadratic character $\omega_{K/F}$. Then from the definition of conductor
we can say 
$\omega_{K/F}$ is trivial on $U_{F}^{s}$, but nontrivial on $U_{F}^{s-1}$. The character $\omega_{K/F}$ is also trivial on norm 
$N_{K/F}(K^\times)$. By Corollary 3 on p. 85 of \cite{JP}, we have 
$$N_{K/F}(U_{K}^{\varPsi(n)})=U_{F}^{n}\quad for \quad n>t\quad and \quad N_{K/F}(U_{K}^{\varPsi(n)+1})=U_{F}^{n+1}
\quad for\quad n\ge t.$$
This implies $\omega_{K/F}$ trivial on $U_{F}^{n+1}$ when $n\geq t$
and trivial on $U_{F}^{n}$ when $n>t$. Therefore $t+1\geq s$. Now we have show that $s=t+1$. We assume the conductor of
$\omega_{K/F}$ is greater than or equal to $t+2$, therefore 
$\omega_{K/F}$ is nontrivial on $U_{F}^{t+1}$ which is not possible. Hence $s=t+1$. Therefore the conductor of
$\omega_{K/F}$ is $t+1$.

Let $\chi$ be a symplectic type character with conductor $m$. Therefore $\chi|_{U_{K}^{m}}=1$ but $\chi|_{U_{K}^{m-1}}\neq 1$.
Again $\chi|_{F^\times}=\omega_{K/F}$, then from relation (\ref{eqn 6.11}) we get $\omega_{K/F}$ is trivial on 
$U_{F}^{\frac{m}{2}}$ for $m$ even, and trivial on $U_{F}^{\frac{m+1}{2}}$ for $m$ odd. These gives use 
$m\geq2(t+1)$ (when $m$ even) and $m\geq 2t+1$ (when $m$ odd). Now we have to prove that when $m$ is odd, and it is exactly 
$2t+1$, i.e., $a(\chi)=2t+1$. Suppose the conductor of $\chi$ is $2t+r$ where $r\geq 3$ and co-prime to $2$. 
Therefore $\chi|_{U_{K}^{2t+r}}=1$ but $\chi|_{U_{K}^{2t+r-1}}\neq 1$. Here $r$ can be written as $r=2b+1$, where $b\geq 1$.
 We can write 
 \begin{center}
  $U_{K}^{2t+r-1}=U_{K}^{2t+2b}=U_{F}^{t+b}U_{K}^{2t+2b+1}=U_{F}^{t+b}U_{K}^{2t+r}$.
 \end{center}
Now if we restrict $\chi$ to $U_{K}^{2t+r-1}$ we have
\begin{align}
 \chi|_{U_{K}^{2t+r-1}}=\chi|_{U_{F}^{t+b}U_{K}^{2t+r}}=\chi|_{U_{F}^{t+b}}\times\chi|_{U_{K}^{2t+r}}
 =\omega_{K/F}|_{U_{F}^{t+b}}=1,\label{eqn 3.12}
\end{align}
since $a(\omega_{K/F})=t+1$ and $a(\chi)=2t+r$.
But the left hand side of equation (\ref{eqn 3.12}), $\chi|_{U_{K}^{2t+r-1}}\neq1$ which is a contradiction. 
Therefore it is impossible that the conductor of 
$\chi$ (when conductor is odd) is $2t+r (r\geq3)$. 
Thus we conclude that when the conductor of symplectic type character is odd, it is exactly $2t+1$.

This completes the proof. 

\end{proof}

\begin{rem}
 Let $\chi,\chi'$ be two symplectic type characters. Then there exists a nontrivial character 
$\eta:K^\times\to\bbC^\times$ with $\eta|_{F^\times}\equiv 1$ such that 
$$\chi'=\eta\cdot\chi.$$
Since we just notice that $a(\chi'), a(\chi)$ are $\ge \{2t+1, 2l\}$, where $l\ge (t+1)$, the conductor of 
$\eta$ must be a even number greater than $2t+1$.

Again we notice that there exists a symplectic type character $\chi_{odd}$ of $K^\times$ with conductor $2t+1$.
It can be proved all 
symplectic type 
characters $\chi$ can be represented  as follows:
$$\chi=\eta\cdot\chi_{odd},\quad \text{where}\quad\eta:K^\times\to\bbC^\times \quad with \quad \eta|_{F^\times}\equiv 1.$$
Then 
$$a(\chi)=a(\eta\cdot\chi_{odd})=max\{a(\eta),a(\chi_{odd})\}=a(\eta).$$
 
\end{rem}

\vspace{.5cm}
\begin{proof}[{\bf Proof of Theorem \ref{Theorem 1.2}}]
By our choice
$\pi_K$ is a uniformizer of $K$ and  $\pi_F=N_{K/F}(\pi_K)$ is a uniformizer of $F$.
This implies $\pi_F=-\pi_{K}^{2}$.
Let $t$ be the ramification break of the extension $K/F$.
We also know from Theorem \ref{Theorem 1.1} that $2d\geq 2(t+1)$, here $t$ is odd and $t<2e$, where 
$e$ is the absolute ramification index of $F$. Therefore when conductor of symplectic type 
character is $2d$, then $d\geq 2$.

The main ingredient of the proof is Theorem \ref{Theorem 2.1}. Here the conductor of $\chi$ is $2d$. Then we 
are in the {\bf even conductor} situation of Theorem \ref{Theorem 2.1}, 
that is, we need to use Corollary \ref{Corollary 2.2}(1).
To do this first we have to choose a $c'\in K$ such that we are able to use Corollary \ref{Corollary 2.2}(1) for 
our symplectic character $\chi$.\\
{\bf Proper choice of $c'$ for the proof:}\\
In general, the idea of choosing proper $c$ for using Theorem \ref{Theorem 2.1} is as follows. Let $\theta$ be a
multiplicative character of $F$ with conductor $a(\theta)\ge 2m$, where $m\in\bbZ_{\ge 0}$, and $\psi_F$ be a nontrivial
additive character of $F$. Now one can prove that
$$\psi_\theta(y):=\theta(1+y)\quad \text{for all $y\in P_F^{a(\theta)-m}$}$$
is a nontrivial character of $P_F^{a(\theta)-m}$ because 
for $y,y'\in P_F^{a(\theta)-m}$, we can write 
 $$\theta(1+y)\theta(1+y')=\theta((1+y+y')(1+\frac{yy'}{1+y+y'}))=\theta(1+y+y'),i.e., 
 \psi_\theta(y+y')=\psi_\theta(y)\cdot \psi_\theta(y')$$
 because $1+\frac{yy'}{1+y+y'}\in U_{F}^{2(a(\theta)-m)}\subseteq U_F^{a(\theta)}$.
 That is, $y\mapsto \theta(1+y)$ is a character of the additive group $P_F^{a(\theta)-m}$.
 This character extends to an additive character of the field $F$ and, by {\bf local additive duality}, 
 there is some $c\in F^\times$ such that
 $$\psi_\theta(y)=\theta(1+y)=\psi_F(c^{-1}y)=(c^{-1}\psi_F)(y),\quad\text{for all $y\in P_F^{a(\theta)-m}$}.$$
So comparing the conductors of both sides we must have:
 $$a(\theta)=-n(c^{-1}\psi)=\nu_F(c)-n(\psi_F),$$
hence {\bf $\nu_F(c)=a(\chi)+n(\psi_F)$ is the right assumption for equation (\ref{eqn 6.0.9}).}

Moreover, every $x\in U_F/U_F^{a(\theta)}$ can be expressed as $x=z(1+y)$, where $y\in P_F^{a(\theta)-m}$ and $z$ 
 runs over the 
 system of representatives for $U_F/U_F^{a(\theta)-m}$, then we will have
  \begin{equation}\label{eqn 5.4.7}
  \sum_{x\in U_F/U_F^{a(\theta)}}\theta^{-1}(x)\psi_F(c^{-1}x)=
  q_F^m \sum_{z\in U_F^{m}/U_F^{a(\theta)-m}}\theta^{-1}(z)\psi_F(c^{-1}z).
 \end{equation}
By using this equation (\ref{eqn 5.4.7}), one also can see that the formula (\ref{eqn 6.0.9}) 
does not depend on the {\bf unit} part of 
$c$ (cf. Remark 2.1).

Now we come to the proof of Theorem 1.2:\\
By our assumption, conductor of $\chi$ is $a(\chi)=2d$, an even positive integer.
Here $m=d$. Then the idea is to use Corollary \ref{Corollary 2.2}(1), so we have 
to choose proper $c'$ here which we do as above and our proper (in the sense the expression of $\epsilon(\chi,\psi)$
is good enough for proving our assertion)
{\bf choice of $c'$ is: $c'=\pi_K^{2d}$}. So now we have to verify whether $c'=\pi_K^{2d}$ satisfies the condition of 
Lamprecht-Tate Theorem \ref{Theorem 2.1} or not. Now we verify this in the following.

We have observed that the Lamprecht-Tate formula (\ref{eqn 6.0.9}) 
is valid for any $c'$ such that $\nu_K(c')=a(\chi)+n(\psi)=2d+0=2d.$
But it order to make the expression $\epsilon(\chi,\psi)$ as in our assertion, we have to improve the choice of $c'$.
And to make Lamprecht-Tate works we need a $c'$ such that
\begin{equation}\label{eqn 3.5}
\chi(1+x)  = (c'^{-1}\psi)(x)\quad\text{for all $x$  such that $ \nu_K(x) \ge a(\chi)/2=d$}.
\end{equation}
Here we see that our $c'=\pi_K^{2d}$ satisfies equation (\ref{eqn 3.5}). Now we are left to show that 
$a(\chi)=-n(c'^{-1}\psi)$, and it can be seen as follows:
$$n(c'^{-1}\psi)=\nu_K(c'^{-1})+n(\psi)=\nu_K(\pi_K^{-2d})+0=-2d=-a(\chi)$$
because conductor of $\chi$ (resp. $\psi$) is $2d$ (resp. $0$). 

Therefore here we can choose $c'=\pi_K^{2d}$ for applying Lamprecht-Tate formula. This is the verification of the 
choice of $c'=\pi_K^{2d}$.\\

We have verified above that our choice $c'=\pi_K^{2d}$ is a proper choice for the Lamprecht-Tate formula \ref{eqn 6.0.9}, and 
here $a(\chi)=2d$, so from Corollary \ref{Corollary 2.2}(1) (even conductor case) we have
 \begin{align*}
 \epsilon(\chi,\psi)
 &=\chi(c')\times\psi(c'^{-1})\\
 &=\chi(\pi_{K}^{2d})\times\psi(\pi_{K}^{-2d})\\
 &=\chi(\pi_{K}^{2})^{d}\times\psi((\pi_{K}^{2})^{-d})\\
 &=\chi(- \pi_F)^{d}\times\psi((-\pi_F)^{-d})\quad\text{since $\pi_F=N_{K/F}(\pi_K)=-\pi_K^2$}\\
 &=\chi(-1)^{d}\omega_{K/F}(\pi_F)^{d}\times 1 \quad \text{since $\psi|_{F}=1$}\\
 &=\chi(-1)^{d}, \quad\text{since $\pi_F\in N_{K/F}(K^\times)$, then $\omega_{K/F}(\pi_F)=1$}.\\
\end{align*}

\end{proof}

\vspace{.2cm}
\begin{proof}[{\bf Proof of Theorem \ref{Theorem 1.3}}]
The idea of this proof is same as Theorem \ref{Theorem 1.2}, but here we have to deal with odd conductor situation
of Lamprecht-Tate formula \ref{eqn 6.0.9}, hence we will use Corollary \ref{Corollary 2.2}(2).

By our choice $\pi_K$ is a uniformizer of $K$, and $N_{K/F}(\pi_K)=\pi_F$ is a uniformizer of $F$.
 Since $a(\chi)=2t+1$ and $n(\psi)=2l+1$, therefore we  choose (it can be checked in the same way what we have done 
 in the proof of Theorem \ref{Theorem 1.2} that $c'$ is suitable for applying Lamprecht-Tate formula)
 $$c'=\pi_{K}^{a(\chi)+n(\psi)}=\pi_{K}^{2t+1+2l+1}=\pi_{K}^{2(t+l+1)}.$$
 
 Moreover, $\psi|_{F}=1$ by Lemma \ref{Lemma 3.1}, so we can write
 $$\psi(c'^{-1})=\psi(\pi_{K}^{-2(t+l+1)})=\psi((-\pi_F)^{-(t+l+1)})=1,$$ 
  and 
\begin{align*}
 \chi(c')=\chi(\pi_{K}^{2(t+l+1)})=\chi(\pi_{K}^{2})^{t+l+1} =\chi(-\pi_F)^{t+l+1}\\
 =\chi(-1)^{t+l+1}
 \times\omega_{K/F}(\pi_F)^{t+l+1}=\chi(-1)^{t+l+1}.
\end{align*}
 
Now from Corollary \ref{Corollary 2.2}(2)  we can write 
\begin{align*}
 \epsilon(\chi,\psi)
 &=\chi(c')\psi_{-1}(c'^{-1})\cdot G(Q)\\
 &=\chi(-1)^{t+l+1}G(Q)\\
 &=\chi(-1)^l G(Q),
\end{align*}
where 
$$G(Q):= q_K^{-1/2}\sum_{x\in P_K^t/P_K^{t+1}} Q(x), \quad Q(x):=\chi^{-1}(1+x)(c'^{-1}\psi)(x).$$

\end{proof}

\begin{conj}
 With the above notations, $G(Q)$ is $\pm 1$.
\end{conj}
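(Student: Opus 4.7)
The plan is to prove the conjecture by establishing two independent facts: $|G(Q)| = 1$ and $G(Q) \in \mathbb{R}$, which together force $G(Q) \in \{\pm 1\}$.

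First I would verify that $Q$ descends to a well-defined function on $P_K^t/P_K^{t+1}$ and satisfies the \emph{quadratic functional equation}
\[
Q(x+y) = Q(x)Q(y)\,\psi(xy/c'), \qquad x,y\in P_K^t/P_K^{t+1},
\]
with symmetric bilinear form $B(x,y):=\psi(xy/c')$. Both claims come from the Lamprecht-Tate relation $\chi(1+y)=\psi(y/c')$ for $y\in P_K^{t+1}$, applied to the correction term $\chi^{-1}\bigl(1+\frac{xy}{1+x+y}\bigr)$ that appears when one compares $\chi^{-1}(1+x+y)$ with $\chi^{-1}(1+x)\chi^{-1}(1+y)$, together with a short valuation computation (using $\nu_K(c')=2(t+l+1)$ and $n(\psi)=2l+1$) showing that the higher-order terms land in $\ker\psi$. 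In particular, $B$ is non-degenerate because $\psi$ has conductor exactly $2l+1$ and the kernel condition $\nu_K(xz/c')\ge -(2l+1)$ for all $x\in P_K^t$ forces $z\in P_K^{t+1}$.

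For the modulus, expanding
\[
|G(Q)|^2 = q_K^{-1}\sum_{x,y}Q(x)\overline{Q(y)}
\]
and substituting $y=x+z$, the functional equation gives $Q(x)\overline{Q(x+z)}=Q(z)^{-1}B(x,z)^{-1}$, hence
\[
|G(Q)|^2 = q_K^{-1}\sum_{z}Q(z)^{-1}\sum_{x}\psi(-xz/c').
\]
The inner sum is the orthogonality of an additive character on $P_K^t/P_K^{t+1}$; the non-degeneracy of $B$ shows it equals $q_K$ when $z\equiv 0$ and vanishes otherwise. This gives $|G(Q)|^2=1$.

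For reality, I would exploit the symplectic condition. Since $a\,\sigma(a)=N_{K/F}(a)$ and $\chi|_{F^\times}=\omega_{K/F}$ is trivial on $N_{K/F}(K^\times)$, one obtains $\chi\cdot\chi^\sigma=1$, so $\chi^{-1}(1+\sigma(x))=\chi(1+x)$. Moreover, $c'=\pi_K^{2(t+l+1)}=(-\pi_F)^{t+l+1}\in F$, so $\sigma(c')=c'$; and $\psi$ is trivial on $F$ by Lemma \ref{Lemma 3.1}, so $\psi\bigl(\mathrm{Tr}_{K/F}(x)/c'\bigr)=1$. Combining these gives $\overline{Q(x)}=Q(\sigma(x))$. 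Since $\sigma(\pi_K)=-\pi_K$ and the residue extension is trivial, $\sigma$ acts on the $\mathbb{F}_{q_F}$-line $P_K^t/P_K^{t+1}$ as multiplication by $(-1)^t$, so the change of variable $y=(-1)^t x$ yields
\[
\overline{G(Q)}=q_K^{-1/2}\sum_{x}Q(\sigma(x))=q_K^{-1/2}\sum_{y}Q(y)=G(Q).
\]

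Combining $|G(Q)|=1$ with $G(Q)\in\mathbb{R}$ gives $G(Q)\in\{\pm 1\}$. The main bookkeeping obstacle is checking the valuations for the various remainder terms (particularly in showing $Q$ is well-defined mod $P_K^{t+1}$ and in verifying the quadratic identity), where one repeatedly uses $t\ge 1$ and $t<2e$. Determining which sign occurs in terms of arithmetic invariants of $K/F$ is a separate, harder problem not addressed here.
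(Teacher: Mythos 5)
First, note that the paper does not prove this statement: it is stated as a conjecture, and the remark that follows it (via Gerardin's lemma) only pins $G(Q)$ down to an $8$th root of unity satisfying $G(Q)^4=(-1)^{[\kappa_K:\F_2]}$. Your first half — the Gauss-sum computation of $|G(Q)|=1$ via the quadratic functional equation and the nondegeneracy of $B(x,y)=\psi(xy/c')$ — is correct. The reality argument, however, has a genuine gap. It rests on taking $c'=\pi_K^{2(t+l+1)}\in F^\times$, so that $\sigma(c')=c'$ and $\mathrm{Tr}_{K/F}(x)/c'\in F\subseteq\Ker\psi$. But no $c'\in F^\times$ can satisfy the defining congruence $\chi(1+y)=\psi(y/c')$ for all $y\in P_K^{t+1}$: restrict to $y\in P_F^{(t+1)/2}\subseteq P_K^{t+1}$; then $y/c'\in F$ and $\psi|_F\equiv 1$ force the right side to be $1$, while the left side is $\omega_{K/F}(1+y)$, which is nontrivial on $U_F^{(t+1)/2}$ because $a(\omega_{K/F})=t+1>(t+1)/2$. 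And once $c'\notin F^\times$, the step $\psi\bigl(\mathrm{Tr}_{K/F}(x)/c'\bigr)=1$ fails: since $d_{K/F}=t+1$, one has $\mathrm{Tr}_{K/F}(P_K^t)=P_F^{\lfloor(2t+1)/2\rfloor}=P_F^t$, so $\nu_K(\mathrm{Tr}_{K/F}(x)/c')$ attains $2t-(2t+2l+2)=-(2l+2)=-(n(\psi)+1)$, a level at which $\psi$ is nontrivial. Hence $\ol{Q(x)}=Q(\sigma(x))$ is not established.

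In fact the conclusion $Q(x)\in\bbR$ is refuted by your own functional equation: taking $y=x$ gives $Q(x)^2\,\psi(x^2/c')=Q(2x)=Q(0)=1$ because $2x\in P_K^{t+2e}\subseteq P_K^{t+1}$, so $Q(x)^2=\psi(x^2/c')^{-1}$; and $x\mapsto\psi(x^2/c')$ is a nontrivial $\{\pm1\}$-valued additive character of $P_K^t/P_K^{t+1}$ (Frobenius composed with a character nontrivial on $P_K^{2t}/P_K^{2t+1}$, precisely because $B$ is nondegenerate). Thus $Q(x)=\pm i$ for some $x$, which is consistent with Gerardin's $\gamma(\ol Q)^4=(-1)^{[\kappa_K:\F_2]}$ and shows that, e.g., for $q_K=2$ the sum $q_K^{-1/2}(Q(0)+Q(\pi_K^t))=(1\pm i)/\sqrt2$ is a primitive $8$th root of unity. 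So a proof of the conjecture (if it is true as stated) cannot proceed by showing $G(Q)$ is real by this symmetry; it would have to use finer arithmetic input — for instance deducing it from Prasad's result that $\epsilon(\chi,\psi)=\pm1$ for symplectic type $\chi$ and $\psi|_F\equiv1$, or controlling the class of $c'$ modulo $U_K^{t}$ — none of which your argument supplies.
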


\begin{rem}
 Let $\kappa_K$ be the residue field of $K$. In the above theorem,
due to the choice of $c'$, we observe that $Q(x)$ is
a function on $P_K^t/P_K^{t+1}$ such that:
$$  \frac{Q(x+y)}{Q(x)Q(y)}=\chi\left(1+\frac{xy}{1+x+y}\right) = (c'^{-1}\psi)(xy),\quad \textrm{for}\; x,y\in P_K^t.$$

Since $a(\chi)=2t+1 \ne 1$ and $p=2$ hence $\epsilon(\chi,\psi)=\chi(c')\psi(c'^{-1})\cdot G(Q),$\\
for $c'\in K^\times$, which means that $Q(x)=\chi^{-1}(1+x)(c'^{-1}\psi)(x)$ is a 
function on $P_K^t/P_K^{t+1}$ such that $\ve\in\kappa_K\mapsto \ol Q(\ve):=Q(\pi_K^t\ve)$ satisfies
$$  \frac{\ol Q(\ve_1+\ve_2)}{\ol Q(\ve_1)\ol Q(\ve_2)} = (\pi_K^{2t}c'^{-1}\psi)(\ve_1\ve_2).$$
Therefore $\ol Q$ is (in the sense of Subsection 1.1.5 of \cite{Ge 75}) a quadratic character of 
$\kappa_K$ which is related
to the additive character $\tau:= \pi_K^{2t} c'^{-1}\psi,$ and then by the second part of Lemma 3.1 in Subsection 1.1.9 of 
\cite{Ge 75}, we have\\
$G(Q) = \gamma(\ol Q)$ is an 8th root of unity such that:
$$  \gamma(\ol Q)^2 = \ol Q(c'(\tau)),\qquad \gamma(\ol Q)^4 = (-1)^{[\kappa_K:\F_2]},$$
where $c'(\tau)\in\kappa_F^\times$ is determined by the relation $\tau(\ve) = (-1)^{Tr(\ve/ c'(\tau)^2)}$ for all 
$\ve$, and $Tr:=Tr_{\kappa_K|\F_2}.$
\end{rem}

\begin{rem}
When $K/F$ is a quadratic unramified or tamely ramified, we have the explicit signs of epsilon factor of 
symplectic type characters due to Dipendra Prasad 
(see \cite{DP}, pp. 22-23). He uses Deligne's twisting formula
(cf. Corollary \ref{Corollary 2.2}(3)). One also can determine the explicit sign directly by using
Lamprecht-Tate formula. 
Let $K/F$ be a quadratic unramified extension of local field $F$. 
Let $\chi$ be a ramified symplectic type character of $K^\times$, i.e., conductor $a(\chi)\geq 1$
and $\psi_{b}$ be a nontrivial additive character
of $K$ of the form $\psi_{b}(x):=\psi_{K}(bx)=\psi_{F}(\mathrm{Tr}_{K/F}(bx))$ for all $x\in K$ and $b\in K^\times$
with $\mathrm{Tr}_{K/F}(b)=0$. Then we have (cf. \cite{DP}, p. 22-23)
\begin{equation}
 \epsilon(\chi,\psi_{b})=\begin{cases}
                      1 & \text{when $a(\chi)$ even},\\
                      -1 & \text{when $a(\chi)$ odd}.
                     \end{cases}
\end{equation}

If $K/F$ is tamely ramified, then $t=0$. Then from Theorem \ref{Theorem 1.1} we can see 
that the conductor of symplectic type character is \textbf{one or even} and 
the conductor of $\omega_{K/F}$ is $1$, i.e.,
$\omega_{K/F}$ is trivial on 
$1+\pi_F O_{F}$. 
In the tame case we choose $\pi_K$ as a uniformizer and then $\pi_F=N_{K/F}(\pi_K)$ is a uniformizer of $F$.
Now fix a character $\tilde{\omega_{K/F}}$ of $K^\times$ which extends the character $\omega_{K/F}$ of $F^\times$.
We use this isomorphism $U_{K}/U_{K}^{1}\cong U_F/U_{F}^{1}$ (since $K/F$ is a quadratic 
tamely ramified) to extend $\omega_{K/F}$ to $U_K$.
And we know $a(\omega_{K/F})=1$, then it can be seen
that the conductor of $\tilde{\omega_{K/F}}$ is also $1$.
 If  $\chi$ is a character which extends the character $\omega_{K/F}$ with $a(\chi)=1$,
 then $\chi$ is either $\tilde{\omega_{K/F}}$ or $\tilde{\omega_{K/F}}\mu$
 where $\mu$ is an unramified character of $K^\times$ taking value $-1$ at $\pi_K$. Then we have,
 \begin{center}
  $\epsilon(\tilde{\omega_{K/F}}\mu,\psi)=\mu(\pi_K)\epsilon(\tilde{\omega_{K/F}},\psi)=-\epsilon(\tilde{\omega_{K/F}},\psi)$,
 \end{center}
 where $\psi$ is a nontrivial additive character of $K$ with conductor zero.
By using the quadratic classical Gauss sum formula (cf. \cite{LN}, p. 199, Theorem 5.15) 
we can give more explicit formula of $\epsilon(\tilde{\omega_{K/F}},\psi_{-1})$, where
$\psi_{-1}$ is non trivial additive character of $K$ with conductors $-1$. 
 Then 
 \begin{equation}
  \epsilon(\tilde{\omega_{K/F}},\psi'_{-1})=(-1)^{s-1} \quad\text{when $p\equiv 1\pmod{4}$, and $q_F=p^s$}.
 \end{equation}
And when $p\equiv 3\pmod{4}$ and $q_F=p^{2r}(r\geq 1)$,
we have
\begin{equation}
\epsilon(\tilde{\omega_{K/F}},\psi'_{-1})=(-1)^{r-1}. 
\end{equation}
  
 Now let $\chi$ be a symplectic type character of $K^\times$ with conductor 
 $2d (d\geq 1)$. Let $\psi$ be a nontrivial additive character of $K$ which is trivial on $F$ with
 conductor $zero$.
 Then by using Theorem \ref{Theorem 1.2} we have
 \begin{equation}
  \epsilon(\chi,\psi)=\begin{cases}
    1 &\text{if $q_F\equiv 1\pmod{4}$}\\
    (-1)^{d} & \text{if $q_F\equiv 3\pmod{4}$}.
   \end{cases}
 \end{equation}

\end{rem}

\vspace{20mm}
\textbf{Acknowledgements:} 
\thispagestyle{empty}
I express my sincere gratitude to E.-W. Zink  for his valuable comments on the paper.
I also extend my gratitude to the referee for his/her comments for the improvement of the paper.


\end{document}